\definecolor{darkblue}{rgb}{0.0,0.0,0.6}
\renewcommand*{\backref}[1]{}
\renewcommand*{\backrefalt}[4]{%
    \ifcase #1 (Not cited.)%
    \or        (Cited on page~#2.)%
    \else      (Cited on pages~#2.)%
    \fi}
\title{Products of Differences in Prime Order Finite Fields}
\author{Giorgis Petridis}
\date{}
\theoremstyle{plain}
\newtheorem{theorem}{Theorem}
\newtheorem{lemma}[theorem]{Lemma}
\newtheorem{proposition}[theorem]{Proposition}
\theoremstyle{definition}
\newtheorem*{definition}{Definition}
\newtheorem*{acknowledgement}{Acknowledgement}
\renewcommand*{\backref}[1]{}
\renewcommand*{\backrefalt}[4]{%
    \ifcase #1 (Not cited.)%
    \or        (Cited on page~#2.)%
    \else      (Cited on pages~#2.)%
    \fi}
\newcommand{\F}{\mathbb{F}_p} 
\newcommand{\R}{\mathbb{R}} 
\newcommand{\supp}{\mathrm{supp}} 
\newcommand{\ds}{\displaystyle} 
\newcommand{\f}{\frac} 
\newcommand{\sm}{\setminus} 
\newcommand{\pda}{(A-A)(A-A)} 
\newcommand{\psd}{(A-B)(C-D)} 
\begin{document}

\onehalfspacing

\pagenumbering{arabic}

\setcounter{section}{0}

\bibliographystyle{plain}

\maketitle

\begin{abstract}
There exists an absolute constant $C$ with the following property. Let $A \subseteq \mathbb{F}_p$ be a set in the prime order finite field with $p$ elements. Suppose that $|A| > C p^{5/8}$. The set 
\[
(A \pm A)(A \pm A) = \{(a_1 \pm a_2)(a_3 \pm a_4) : a_1,a_2,a_3,a_4 \in A\}
\] 
contains at least $p/2$ elements. 
\end{abstract}

\section[Introduction]{Introduction}
\label{Introduction}

\let\thefootnote\relax\footnotetext{The author is supported by the NSF DMS Grant 1500984.}

Landau's $\Omega$ and $O$ notation is used above and throughout the paper. We investigate the following sum-product question in $\F$, the finite field with $p$ elements where $p$ is a prime. Let $A \subseteq \F$ be a set. When is the set
\[
(A-A)(A-A) = \{(a_1-a_2)(a_3-a_4) : a_1,a_2,a_3,a_4 \in A\} \subseteq \F
\] 
``large''? There is a number of results in the literature that address this type of question (in fact over finite fields of not necessarily prime order).
\begin{enumerate}
\item[(1)] $\pda = \F$ when $|A| > p^{3/4}$ (Hart, Iosevich, and Solymosi \cite{HIS2007}).
\item[(2)] $|\pda| = \Omega(p)$ when $|A| = \Omega( p^{2/3})$ (Bennet, Hart, Iosevich, Pakianathan, and Rudnev \cite{BHIPR}).
\item[(3)] $\underbrace{(A-A) \dots (A-A)}_\text{$k$ terms} = \F$ when $|A| = \Omega(p^{1/2})$ for some $k = O(\log\log(p))$ independent of $A$ (Balog \cite{Balog2013}).
\end{enumerate} 

The introduction of \cite{Balog2013} discusses the problem in more detail and its connection to (additive and multiplicative) characters. We strengthen (2) in the following way. 
\begin{theorem}\label{ProdDiff}
Let $p$ be a prime and $A \subseteq \F$ be a set in the finite field with $p$ elements. Suppose that $B,C,D$ are translates of non-zero dilates of $A$, that is of the form $c + \lambda A$ for $0 \neq \lambda, c \in \F$.  Suppose that $|A| = \Omega(p^{3/7})$. The number of solutions to
\[
(a-b)(c-d) = (a'-b')(c'-d') \text{ with } a,a' \in A, \dots , d,d' \in D
\]
is $~\ds \f{|A|^8}{p} + O(p^{2/3} |A|^{16/3})$.

Hence if $|A| = \Omega(p^{5/8})$, then the number of solutions is $O(|A|^8/p)$ and consequently
\[
|(A \pm A)(A \pm A)| > \frac{p}{2}.
\] 
\end{theorem}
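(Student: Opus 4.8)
I will first reduce the ``Hence'' part to the displayed count. Let $N$ be the number of solutions in the theorem and, for $\lambda\in\F$, put $f(\lambda)=\#\{(a,b,c,d)\in A\times B\times C\times D:(a-b)(c-d)=\lambda\}$, so that $\sum_\lambda f(\lambda)=|A|^4$ and $N=\sum_\lambda f(\lambda)^2$. As $f$ is supported on $\psd$, Cauchy--Schwarz gives $|A|^8\le|\psd|\cdot N$, i.e.\ $|\psd|\ge|A|^8/N$. Since $-A$ is a dilate of $A$, taking $B=-A$ and $C,D\in\{A,-A\}$ exhibits each of the four sets $(A\pm A)(A\pm A)$ as some $\psd$ with $B,C,D$ admissible; so once $N=|A|^8/p+O(p^{2/3}|A|^{16/3})$ is known, the hypothesis $|A|>Cp^{5/8}$ (with $C$ large in terms of the implied constant) makes the error term smaller than $|A|^8/p$, hence $N<2|A|^8/p$ and $|(A\pm A)(A\pm A)|>p/2$. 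Everything therefore comes down to the estimate for $N$.

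\textbf{Reduction to a multiplicative energy.} Summing over the common ratio $\rho$ of the two sides, and discarding the solutions in which some difference vanishes (these number $O(|A|^6)=O(p^{2/3}|A|^{16/3})$, because $|A|\le p$), one has $N=\sum_{\rho\in\F^\times}h(\rho)g(\rho)+O(|A|^6)$, where $h(\rho)=\#\{(a_1,b_1,a_2,b_2):a_1-b_1=\rho(a_2-b_2)\}$ and $g$ is the analogue for $C,D$. By Cauchy--Schwarz, $\sum_\rho h(\rho)g(\rho)\le\bigl(\sum_\rho h(\rho)^2\bigr)^{1/2}\bigl(\sum_\rho g(\rho)^2\bigr)^{1/2}$, and $\sum_\rho h(\rho)^2$ is precisely the number of solutions of $(a_1-b_1)(a_4-b_4)=(a_2-b_2)(a_3-b_3)$ in $a_i\in A$, $b_i\in B$ --- that is, the multiplicative energy of the difference set $A-B$ weighted by $r_{A-B}$. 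Since $|C|=|A|$, it suffices to bound this ``diagonal'' quantity, call it $M$, by $|A|^8/p+O(p^{2/3}|A|^{16/3})$. Expanding $M$ in multiplicative characters $\chi$ of $\F^\times$ gives
\[
M=\frac{1}{p-1}\sum_{\chi}\bigl|\widehat{r_{A-B}}(\chi)\bigr|^4+O(|A|^6),\qquad\widehat{r_{A-B}}(\chi)=\sum_{a\in A,\,b\in B}\chi(a-b),
\]
and the principal character contributes $(p-1)^{-1}(|A|^2-|A\cap B|)^4=|A|^8/p+O(|A|^6)$, the main term. So the whole problem is to show that the non-principal characters contribute $O(p^{2/3}|A|^{16/3})$.

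\textbf{The crux: an incidence estimate.} Here the hypothesis that $B$ is a translate of a dilate of $A$ is used decisively. Writing $b=\lambda a'+\beta$ one sees that $|\widehat{r_{A-B}}(\chi)|=\bigl|\sum_{a,a'\in A}\chi(a-\lambda a'-\beta)\bigr|\le|A|\sqrt p$ for $\chi\ne\chi_0$ (Cauchy--Schwarz in $a'$ together with the classical identity $\sum_{y\in\F}\bigl|\sum_{a\in A}\chi(a-y)\bigr|^2=|A|(p-|A|)$); but feeding this bound crudely into the character sum --- via $\max_{\chi\ne\chi_0}|\widehat{r_{A-B}}(\chi)|^2$ times $\sum_\chi|\widehat{r_{A-B}}(\chi)|^2=(p-1)\sum_s r_{A-B}(s)^2\le(p-1)|A|^3$ --- only yields $O(p|A|^5)$ for the non-principal part, which fails even to beat $|A|^8/p$ unless $|A|\ge p^{2/3}$. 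To get the sharp bound I would instead treat $M$ as the multiplicative energy it is: writing the two sides through $uv=wz\Leftrightarrow u/w=z/v$ and substituting $b=\lambda a'+\beta$ converts $M$ into a count of incidences over $\F$ between a Cartesian-product point set and a family of lines whose coefficients are affine functions of the variables of $A$ --- equivalently, via the standard lift, a point--plane configuration in $\F^3$ --- to which Rudnev's point--plane incidence bound applies, in the shape of a Szemer\'edi--Trotter-type estimate $I(P,L)\ll|P||L|/p+(|P||L|)^{2/3}+\cdots$. The ``rich line'' parameter entering that bound is controlled here by $\max_s r_{A-B}(s)\le|A|$ and by $p$ --- the one essential use of the affine-image hypothesis --- and working out the numerics produces the excess $O(p^{2/3}|A|^{16/3})$ over the main term, the auxiliary terms in the incidence bound being absorbed exactly when $|A|=\Omega(p^{3/7})$.

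\textbf{Where the difficulty lies.} The Cauchy--Schwarz reductions, the removal of degenerate solutions, and the extraction of the main term from the principal character are all routine. The real work --- the step I expect to be the obstacle --- is the incidence estimate for the non-principal contribution: one must choose the point/line (or point/plane) configuration so that its cardinalities are the correct powers of $|A|$ and so that the collinearity parameter appearing in every incidence theorem over $\F$ is genuinely controlled by the additive structure of $A$, and then track the constants carefully enough to land on the clean form $|A|^8/p+O(p^{2/3}|A|^{16/3})$ --- since the size of this error relative to the main term is exactly what fixes the threshold $p^{5/8}$ in the conclusion.
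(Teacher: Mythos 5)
Your opening reductions (Cauchy--Schwarz on the support of $r_{\psd}$, discarding the $O(|A|^6)$ degenerate solutions, factoring the count over the common ratio $\rho$, and Cauchy--Schwarz to symmetrise the two factors) all match the paper's Lemma~\ref{AddEn}, and your extraction of the main term $|A|^8/p$ is equivalent to the paper's Lemma~\ref{Exi2}. But the entire quantitative content of the theorem lives in the step you label ``the crux'' and leave as a sketch, and the route you sketch there does not work as stated. You propose to bound the fourth-moment quantity $M=\sum_\rho h(\rho)^2$ by a \emph{single} application of Rudnev's point--plane incidence bound to the multiplicative energy of $A-B$. A one-shot incidence count of this type is exactly what gives result (2) of the introduction and cannot get below the $p^{2/3}$ threshold: the incidence machinery naturally controls a \emph{first} moment, namely $\sum_{\xi\in X}E_+(A,\xi A)\ll |A|^3|X|^{3/4}+|A|^{5/2}|X|$ (Theorem~\ref{Misha C}), not the sum of \emph{squares} of energies. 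The paper converts the first-moment bound into the level-set estimate of Lemma~\ref{Large K} (at most $O(K^4)$ values of $\xi$ have $E_+(A,\xi A)>|A|^3/K$), and then obtains $\sum_\xi(E_+(A,\xi A)-|A|^4/p)^2$ by a dyadic decomposition over these level sets, with the contribution of the many $\xi$ of small energy controlled by a completely different ingredient: the exact Bourgain--Katz--Tao identity $\sum_{\xi\neq 0}E_+(A,\xi A)=|A|^4+p|A|^2-2|A|^3$ (Lemma~\ref{BKT}). The exponents in the error term come from optimising the dyadic cutoff at $K=(p/|A|)^{1/3}$, which yields $K^2|A|^6=p^{2/3}|A|^{16/3}$ plus a $p|A|^{9/2}$ term from the small-energy range. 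None of these three ingredients --- the restricted first-moment bound, the BKT identity, and the dyadic pigeonholing with its optimisation --- appears in your proposal, so the claim that ``working out the numerics produces the excess $O(p^{2/3}|A|^{16/3})$'' is not substantiated; your own back-of-envelope via $\max_{\chi\neq\chi_0}|\widehat{r_{A-B}}(\chi)|\le|A|\sqrt p$ correctly shows the naive versions stall at $p|A|^5$, and the same happens if you feed the energy $M$ into the incidence theorem in one step.

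A secondary point: the hypothesis that $B,C,D$ are translates of dilates of $A$ is not used where you place it (to control a ``rich line'' parameter by $\max_s r_{A-B}(s)\le|A|$). In the paper it is used only to assert $E_+(B,\xi B)=E_+(A,\xi A)$ after the Cauchy--Schwarz step, so that everything reduces to the single family of energies $E_+(A,\xi A)$; the collinearity condition in the incidence theorem is handled inside Theorem~19 of \cite{AYMRS} for arbitrary $A$ and $X$ with $|A|^2|X|=O(p^2)$, and it is this last constraint (verified via Lemma~\ref{BKT}) --- together with the requirement $K=O(|A|^{1/2})$ --- that produces the $|A|=\Omega(p^{3/7})$ hypothesis, not an absorption of auxiliary incidence terms.
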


$(A \pm A)(A \pm A)$ is used as shorthand for three different sets like
\[
(A-A)(A+A) = \{(a_1-a_2)(a_3 + a_4) : a_1,a_2,a_3,a_4 \in A\}.
\]

A number of sum-product results in finite fields can be summarised as `$|A| = \Omega(p^{2/3})$ implies that a specific set determined by sums and products of elements of $A$ is about as large as it can be'. We list the state of the art for the best known instances of the sum-product phenomenon using standard set theoretic addition such as:  $AA+AA = \{a_1 a_2 + a_3 a_4 : a_i \in A\}$  and $(A-A)^2 + (A-A)^2 = \{(a_1-a_2)^2+(a_3-a_4)^2 : a_i \in A\}.$
\begin{enumerate}
\item[(i)] $|AA+AA| = \Omega( \min\{p, |A|^{3/2}\})$ (Rudnev \cite{Rudnev}; see also \cite{Glibichuck-Konyagin2007,Hart-Iosevich2008,GPInc}).
\item[(ii)] $|(A-A)^2+(A-A)^2| = \Omega(p)$ when $|A| > p^{2/3}$ (Chapman, Erdo{\u{g}}an, Hart, Iosevich, and Koh \cite{CEHIK2012}; see also \cite{HLRN2016,GPInc}).
\item[(iii)] $|A+AA| =   \Omega( \min\{p, |A|^{3/2}\})$ (Roche-Newton, Rudnev, and Shkredov \cite{RRS}; see also \cite{Shparlinski2008,GPInc}).
\item[(vi)] $|A(A+A)| = \Omega( \min\{p, |A|^{3/2}\})$ (Aksoy-Yazici, Murphy, Rudnev, and Shkredov \cite{AYMRS}; see also \cite{GPInc}).
\end{enumerate} 

The hypothesis on $|A|$ in Theorem~\ref{ProdDiff} is below the $p^{2/3}$ threshold. To the best of our knowledge this is the first instance in the literature for such sum-product questions. It appears that the method does not generalise to the above questions, partly because addition and multiplication are not interchangeable in distributive laws. 

The proof relies on the following explicit version of Bourgain's Theorem C from \cite{Bourgain2009}, communicated to us by Misha Rudnev. Let $E_+(A, \xi A)$ be the number of solutions to $a_1+\xi a_2 = a_3 + \xi a_4$ with the $a_i \in A$ and $E_+(A) = E_+(A, A)$. For each $\xi \neq 0$, we have the inequality $E_+(A, \xi A) \leq |A|^3$. Rudnev's theorem gives strong quantitative bounds on the statement that the $|A|^3$ order of magnitude is attained for only few $\xi$.

\begin{theorem}[Rudnev]\label{Misha C}
Let $p$ be a prime and $A \subseteq \F$ and $X \subseteq \F \sm\{0\}$. Suppose that $|A|^2 |X| = O(p^2)$. The following inequality is true.
\[
\sum_{x \in X}E_+(A, x A) = O(E_+(A)^{1/2} (|A|^{3/2} |X|^{3/4} + |A| |X|)) = O(|A|^3 |X|^{3/4} + |A|^{5/2} |X|).
\]
\end{theorem}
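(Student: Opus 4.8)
The plan is to bound $S := \sum_{x \in X} E_+(A, xA)$ by a single Cauchy--Schwarz step that extracts the factor $E_+(A)^{1/2}$ and reduces the whole estimate to one point--plane incidence count, to which Rudnev's point--plane incidence theorem applies. The two numbers in the statement come from the same computation: the first is what Cauchy--Schwarz plus incidences give directly, and the second follows by inserting the trivial bound $E_+(A) \le |A|^3$.

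First I would rewrite the energy sum through $f(d) := r_{A-A}(d) = |\{(a,a') \in A^2 : a - a' = d\}|$. Since $E_+(A,xA)$ counts quadruples with $a_1 - a_3 = x(a_4 - a_2)$, setting $d = a_4 - a_2$ gives $E_+(A,xA) = \sum_d f(d) f(xd)$, so that
\[
S = \sum_d f(d) F(d), \qquad F(d) := \sum_{x \in X} f(xd).
\]
Applying Cauchy--Schwarz and using $\sum_d f(d)^2 = E_+(A)$ yields $S \le E_+(A)^{1/2} T^{1/2}$, where
\[
T = \sum_d F(d)^2 = \big|\{(a_1,a_3,b_1,b_3,x,x') \in A^4 \times X^2 : x'(a_1 - a_3) = x(b_1 - b_3)\}\big|.
\]
(The auxiliary $d$ is recovered uniquely from a solution because $x,x' \neq 0$.) It therefore suffices to prove $T = O(|A|^3 |X|^{3/2} + |A|^2 |X|^2)$; taking square roots gives the first displayed bound, and $E_+(A)^{1/2} \le |A|^{3/2}$ gives the second.

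Next I would realise $T$ as a point--plane incidence count. Writing the defining relation as $x' a_1 - x' a_3 - x b_1 + x b_3 = 0$, it is bilinear in the triples $(x, a_1, a_3)$ and $(x', b_1, b_3)$. Accordingly I set $P = \{(x, a_1, a_3) : x \in X,\ a_1, a_3 \in A\} \subseteq \F^3$, and for each $(x', b_1, b_3) \in X \times A \times A$ the plane
\[
\pi_{(x', b_1, b_3)} = \{(\xi_1, \xi_2, \xi_3) : (b_3 - b_1)\xi_1 + x' \xi_2 - x' \xi_3 = 0\}.
\]
Each solution counted by $T$ is exactly one incidence, so $T = I(P, \Pi)$ with $|P| = |\Pi| = |A|^2 |X|$. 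Crucially, the hypothesis $|A|^2 |X| = O(p^2)$ is exactly the constraint $|P| = O(p^2)$ demanded by Rudnev's theorem, and $|P| \le |\Pi|$ holds with equality, so the theorem is applicable.

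Finally I would invoke the point--plane incidence theorem, which bounds $I(P,\Pi)$ by $O(|P||\Pi|/p + |P|^{1/2}|\Pi| + k|\Pi|)$, with $k$ the maximum number of collinear points of $P$. Since $|P| = O(p^2)$, the term $|P||\Pi|/p = |P|^2/p$ is dominated by $|P|^{1/2}|\Pi| = |P|^{3/2}$. The hard part is controlling $k$, and here the Cartesian structure $P = X \times A \times A$ is essential: an axis-parallel line meets $P$ in at most $\max(|A|,|X|)$ points, and any line with two nonzero direction components in at most $\min(|A|,|X|)$ points, so $k \le \max(|A|,|X|)$. This gives
\[
T = O\big(|P|^{1/2}|\Pi| + k|\Pi|\big) = O\big(|A|^3 |X|^{3/2} + \max(|A|,|X|)\,|A|^2 |X|\big) = O\big(|A|^3|X|^{3/2} + |A|^2|X|^2\big),
\]
which is the required bound on $T$, and the theorem follows. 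I expect the incidence step to be the main obstacle: one must pick the bilinear splitting so that $|P| = |A|^2|X|$ matches Rudnev's $O(p^2)$ hypothesis, and then verify the collinearity bound from the product structure. The degenerate solutions, where $a_1 = a_3$ forces $b_1 = b_3$ and which contribute $O(|A|^2|X|^2)$, sit comfortably within the target and need no separate treatment.
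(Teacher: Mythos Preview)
Your proposal is correct and follows essentially the same route as the paper: the Cauchy--Schwarz step, the resulting quantity $T$ (the paper's ``bracketed term'', the number of solutions to $x_1(a_1-a_2)=x_2(a_3-a_4)$), and the final bound $T=O(|A|^3|X|^{3/2}+|A|^2|X|^2)$ all coincide. The only cosmetic difference is that the paper quotes Theorem~19 of \cite{AYMRS} as a black box for the bound on $T$, whereas you unpack it into a direct application of Rudnev's point--plane theorem with the product point set $X\times A\times A$.
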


By virtue of Bourgain's Theorem C, where it is further assumed that $|X| \leq |A|$ and the exponent of $|X|$ in the right hand side is $1 -\delta$ for some unspecified absolute positive $\delta > 0$, one could prove a weaker version of Theorem~\ref{ProdDiff} where the hypothesis would be $|A| = \Omega(p^{2/3-c})$ for some absolute positive constant $c>0$ depending on unspecified absolute constants appearing in Bourgain's paper. Rudev's explicit version of Theorem C steams from a recent result of Aksoy-Yazici, Murphy, Rudnev, and Shkredov in \cite{AYMRS}, which is based on an adaption of a theorem of Guth and Katz from their solution to the Erd\H{o}s distinct distance problem on the plane \cite{Guth-Katz2015} (see also \cite{Rudnev, RRS}). It is analogous, albeit weaker, to a result of Murphy, Roche-Newton, and Shkredov over $\R$ \cite{MRS2015}.

The first part of Theorem~\ref{ProdDiff} can be stated in terms of multiplicative characters in $\F$. For simplify we state it for the special case when $A=B=C=D$.
\[
\f{1}{p-1} \sum_{\chi \text{ mult.}} \left| \sum_{a, a' \in A} \chi(a - a') \right|^4 = \f{|A|^8}{p-1} + O(p^{2/3} |A|^{16/3}).
\]

The structure of the paper is linear. Some organisational and notational remarks: A sketch of the proof of Theorem~\ref{ProdDiff} is given in Section~\ref{Sketch}. The condition that $p$ is a prime appears in statements only when it is necessary. We denote by $\tfrac{a}{b}$ the product $a b^{-1}$.

\begin{acknowledgement}
The author would like to thank Antal Balog and Alex Iosevich for their encouragement to work on this problem, and Brendan Murphy, Misha Rudnev, and Olly Roche-Newton for generously sharing their insight.
\end{acknowledgement}

\section{Additive energies of a set and its dilates}
\label{Add En}

We begin by defining the additive energy of a set and its dilate and listing some of its properties, which will be instrumental in the proof of Theorem~\ref{ProdDiff}.

We use standard notation on set operations. For sets $A, B \subseteq \F$ we denote by $A+B = \{a+b : a\in A, b \in B\}$
their \emph{sum set}. For $\xi \in \F$ we denote by $\xi A =\{ \xi a : a \in A\}$ the \emph{dilate} of $A$ by $\xi$.

We also use \emph{representation functions} of the following kind
\[
r_{A+BA}(x) = \text{ number of solutions to $x=a+ba'$ with $a,a'\in A$ and $b \in B$}.
\]
Note that, say, the dilate sum $A + \xi A$ is the support of the function $r_{A + \xi A}$.

\begin{definition}
Let $0 \neq \xi \in \F$. The \emph{additive energy} of $A$ and $\xi A$ is defined in the following equivalent ways
\begin{align*}
E_+(A, \xi A) 
& = \sum_{x \in \F} r_{A \pm \xi A}^2(x) \\
& = \text{number of solutions to $(a_1-a_2) = \xi (a_3-a_4)$ with the $a_i \in A$}\\
& = |A|^2 + \text{number of solutions to $\frac{a_1-a_2}{a_3-a_4} = \xi$ with the $a_i \in A$}.
\end{align*}
\end{definition}

The Cauchy-Schwarz inequality and the identity $\sum_x r_{A + \xi A}(x) = |A|^2$ (each ordered pair $(a_1,a_2) \in A \times A$ contributes 1 to the sum) imply
\begin{equation}\label{CS}
E_+(A,\xi A) \geq \frac{\left(\sum_x r_{A + \xi A}(x)\right)^2}{|\supp(r_{A + \xi A})|} \geq \f{|A|^4}{p}.
\end{equation}

The proof of Theorem~\ref{ProdDiff} is based on an exact expression for the sum of the $E_+(A, \xi A)$ over all $0\neq \xi \in \F$ due to Bourgain, Katz, and Tao \cite{BKT2004}.
\begin{lemma}[Bourgain , Katz, Tao]\label{BKT}
Let $A \subseteq \F$. The following identity holds.
\begin{equation*}
\sum_{0\neq \xi \in \F} E_+(A, \xi A) =  |A|^4 + p |A|^2 - 2 |A|^3.
\end{equation*}
Hence for every set $S \subseteq \F\sm \{0\}$ we have the following inequality
\[
\sum_{\xi \in S} \left(E_+(A, \xi A) - \f{|A|^4}{p}\right)  \leq p |A|^2.
\]
\end{lemma}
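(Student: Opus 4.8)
The plan is to prove the identity by a direct double-counting argument over pairs (quadruple, $\xi$), and then to read off the inequality using \eqref{CS}.

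First I would rewrite the left-hand side via the middle description of the energy in the Definition: $E_+(A,\xi A)$ counts the quadruples $(a_1,a_2,a_3,a_4)\in A^4$ with $a_1-a_2=\xi(a_3-a_4)$. Summing over $0\neq\xi\in\F$ and interchanging the order of summation, $\sum_{0\neq\xi}E_+(A,\xi A)$ is exactly the number of pairs $\big((a_1,a_2,a_3,a_4),\xi\big)$ with $(a_1,a_2,a_3,a_4)\in A^4$, $\xi\in\F\sm\{0\}$, and $a_1-a_2=\xi(a_3-a_4)$.

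Next I would split according to whether $a_3=a_4$. If $a_3=a_4$, then the equation forces $a_1=a_2$ (since $\xi\neq 0$), and any such quadruple is then counted once for each of the $p-1$ nonzero values of $\xi$, giving $(p-1)|A|^2$ pairs. If $a_3\neq a_4$, then $\xi$ is uniquely determined as $(a_1-a_2)/(a_3-a_4)$, and it is nonzero exactly when $a_1\neq a_2$; so the number of admissible pairs here equals the number of quadruples with $a_1\neq a_2$ and $a_3\neq a_4$, which is $(|A|^2-|A|)^2$. Adding the two contributions yields
\[
\sum_{0\neq\xi\in\F}E_+(A,\xi A)=(p-1)|A|^2+(|A|^2-|A|)^2=p|A|^2+|A|^4-2|A|^3,
\]
as claimed. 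For the second statement, \eqref{CS} gives $E_+(A,\xi A)-|A|^4/p\geq 0$ for every $\xi\neq 0$, so discarding the terms with $\xi\notin S$ only decreases the sum; inserting the identity,
\[
\sum_{\xi\in S}\Big(E_+(A,\xi A)-\tfrac{|A|^4}{p}\Big)\leq\sum_{0\neq\xi\in\F}\Big(E_+(A,\xi A)-\tfrac{|A|^4}{p}\Big)=p|A|^2-2|A|^3+\tfrac{|A|^4}{p},
\]
and since $|A|\leq p$ we have $|A|^4/p\leq|A|^3\leq 2|A|^3$, so the right-hand side is at most $p|A|^2$.

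I do not anticipate a genuine obstacle: the argument is elementary. The only points requiring care are the bookkeeping of the degenerate quadruples (those with $a_1=a_2$ or $a_3=a_4$) in the case split, and checking that the leftover terms $-2|A|^3+|A|^4/p$ carry the correct sign for the final bound.
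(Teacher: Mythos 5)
Your proof is correct. The two halves should be compared separately. For the inequality, your argument is exactly the paper's: both of you use the Cauchy--Schwarz bound \eqref{CS} to note that each term $E_+(A,\xi A)-|A|^4/p$ is nonnegative, enlarge the sum from $S$ to all of $\F\sm\{0\}$, substitute the identity, and check that the leftover $-2|A|^3+|A|^4/p$ is nonpositive (the paper absorbs this slightly more tersely, but the computation is the same). For the identity itself, the paper gives no proof at all --- it simply cites Lemma~1 of \cite{BKT2004} --- whereas you supply a self-contained double count over pairs (quadruple, $\xi$), splitting on whether $a_3=a_4$. Your case analysis is right: the degenerate quadruples contribute $(p-1)|A|^2$ since every nonzero $\xi$ works, and the nondegenerate ones contribute $(|A|^2-|A|)^2$ since $\xi$ is uniquely determined and nonzero precisely when $a_1\neq a_2$; the totals match. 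So your write-up is a strictly more complete version of the lemma's proof, at the cost of a short elementary computation that the paper outsources to the reference.
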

\begin{proof}
The first statement is Lemma~1 in \cite{BKT2004}. The second follows by the Cauchy-Schwarz lower bound on $E_+(A,\xi A)$ stated in \eqref{CS}:
\begin{align*}
\sum_{\xi \in S} \left(E_+(A, \xi A) - \f{|A|^4}{p}\right) 
& \overset{ \eqref{CS}}\leq \sum_{0\neq \xi \in \F} \left(E_+(A, \xi A) - \f{|A|^4}{p}\right) \\
& = \sum_{0\neq \xi \in \F} E_+(A, \xi A)  -  \f{p-1}{p} |A|^4  \\
& =  |A|^4 + p |A|^2 - 2 |A|^3 - |A|^4 + \f{|A|^4}{p} \\
& \leq p|A|^2. 
\end{align*}
\end{proof} 

The Bourgain-Katz-Tao calculation implies that $E_+(A, \xi A) = \Omega(|A|^3)$ for $O(p/|A|)$ many $\xi$. We deduce stronger quantitative bounds from Theorem~\ref{Misha C}.  

\begin{lemma}\label{Large K}
Let $p$ be a prime, $A \subseteq \F$, and $K = O(\max \{p/|A| , |A|^{1/2} \})$. The number of $\xi \in \F \sm \{0\}$ such that $E_+(A, \xi A) > |A|^3 / K$ is $O(K^4)$.
\end{lemma}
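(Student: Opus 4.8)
The plan is to apply Rudnev's theorem (Theorem~\ref{Misha C}) to the set of dilations of $A$ having large additive energy, with the Bourgain--Katz--Tao identity (Lemma~\ref{BKT}) used beforehand to place us in the range where Theorem~\ref{Misha C} is applicable.

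Let $X = \{\xi \in \F \setminus \{0\} : E_+(A, \xi A) > |A|^3/K\}$, so that we must prove $|X| = O(K^4)$; we may assume $|X| \ge 1$. The first task is to verify Rudnev's hypothesis $|A|^2 |X| = O(p^2)$. When $|A| \le p^{1/2}$ this is automatic, since $|X| \le p-1$. When $p^{1/2} < |A| \le p^{2/3}$, the assumption $K = O(\max\{p/|A|, |A|^{1/2}\})$ forces $K = O(p/|A|)$, so the threshold $|A|^3/K$ is of order at least $|A|^4/p$; hence $E_+(A, \xi A) - |A|^4/p$ is a positive multiple of $|A|^3/K$ for every $\xi \in X$, and summing this over $X$ against the inequality $\sum_{\xi \in X}\big(E_+(A, \xi A) - |A|^4/p\big) \le p|A|^2$ of Lemma~\ref{BKT} yields the preliminary bound $|X| = O(pK/|A|)$, whence $|A|^2|X| = O(p|A|K) = O(p^2)$. (The remaining range $|A| > p^{2/3}$, where $\max\{p/|A|,|A|^{1/2}\} = |A|^{1/2}$ and $|A|^2 > p > |X|$, is treated separately.)

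With Rudnev's hypothesis in hand, Theorem~\ref{Misha C} applied to $A$ and $X$ gives
\[
\frac{|A|^3}{K}\,|X| \;<\; \sum_{\xi \in X} E_+(A, \xi A) \;=\; O\!\left(|A|^3 |X|^{3/4} + |A|^{5/2}|X|\right),
\]
and after dividing through by $|X|$ one of the two terms on the right must dominate $|A|^3/K$. If the first dominates, then $|X|^{1/4} = O(K)$, i.e.\ $|X| = O(K^4)$, as required. If the second dominates, then $K = \Omega(|A|^{1/2})$, so that $K^4 = \Omega(|A|^2)$; in this degenerate branch one finishes by combining $K = \Omega(|A|^{1/2})$ with the assumed upper bound on $K$ and with the preliminary estimate $|X| = O(pK/|A|)$ (or simply with $|X| < p$), again obtaining $|X| = O(K^4)$.

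I expect the main obstacle to be the first step, establishing $|A|^2|X| = O(p^2)$ so that Theorem~\ref{Misha C} can be invoked: this is the place where the hypothesis $K = O(\max\{p/|A|, |A|^{1/2}\})$ is genuinely used, and the verification must be done regime by regime, since the binding term in the maximum --- and hence the way Lemma~\ref{BKT} is applied --- changes as $|A|$ passes $p^{1/2}$ and $p^{2/3}$. Once that is settled, the passage from Theorem~\ref{Misha C} to $|X| = O(K^4)$ is the short dichotomy above.
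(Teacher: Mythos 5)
Your overall strategy is exactly the paper's: use Lemma~\ref{BKT} to verify the hypothesis $|A|^2|X|=O(p^2)$ of Theorem~\ref{Misha C}, then feed $|X|\,|A|^3/K < \sum_{\xi\in X}E_+(A,\xi A)$ into Theorem~\ref{Misha C} and read off $|X|=O(K^4)$ from the $|A|^3|X|^{3/4}$ term. The core is right, but the two cases you defer are genuine gaps, not routine details. For $|A|>p^{2/3}$, the facts you record ($|A|^2>p>|X|$) give only $|A|^2|X|<p|A|^2$, which is $O(p^2)$ precisely when $|A|=O(p^{1/2})$ --- the opposite regime. If, say, $|A|=p^{0.7}$ and $K$ is close to $|A|^{1/2}$, the threshold $|A|^3/K$ can sit barely above the mean $|A|^4/p$, so a priori $|X|$ is nearly $p$ and $|A|^2|X|$ is nearly $p^{2.4}$: Theorem~\ref{Misha C} is simply unavailable, and no ``separate treatment'' by this method exists. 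In the degenerate branch, $K=\Omega(|A|^{1/2})$ together with $|X|=O(pK/|A|)$ gives $|X|=O(K^4)$ only if $p/|A|=O(K^3)$, i.e.\ (with $K$ of order $|A|^{1/2}$) only if $|A|=\Omega(p^{2/5})$, which is not a hypothesis; and $|X|<p=O(K^4)$ needs $|A|=\Omega(p^{1/2})$. Concretely, for $|A|=p^{1/3}$ and $K=|A|^{1/2}=p^{1/6}$ (which is $O(p/|A|)$, hence admissible under the stated hypothesis) the Rudnev bound degenerates: the $|A|^{5/2}|X|$ term is comparable to the left-hand side, and neither escape route closes the branch, while $K^4=p^{2/3}$ is strictly smaller than the best available bound $|X|=O(p^{5/6})$.

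Both gaps come from taking the hypothesis $K=O(\max\{p/|A|,|A|^{1/2}\})$ at face value. The paper's proof does not: it invokes ``the $K=O(p/|A|)$ condition'' to get $|X|=O(Kp/|A|)$ and hence $|A|^2|X|=O(Kp|A|)=O(p^2)$, and then separately invokes ``the $K=O(|A|^{1/2})$ condition'' to kill your degenerate branch by contradiction (if $|X|>|A|^2$, Theorem~\ref{Misha C} forces $K=\Omega(|A|^{1/2})$ with a specific constant, incompatible with $K=O(|A|^{1/2})$ taken with a smaller one). So what is actually proved is the lemma under $K=O(\min\{p/|A|,|A|^{1/2}\})$, and that is all that is used downstream: the eventual choice $K=(p/|A|)^{1/3}$ satisfies both conditions once $|A|=\Omega(p^{3/7})$. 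If you impose both conditions, your argument closes up and coincides with the paper's; as written, the two deferred cases cannot be completed.
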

\begin{proof}
Let $X$ be the set of $\xi$ in question. 

We first prove that $|A|^2 |X| = O(p^2),$ as is required in the statement of Theorem~\ref{Misha C}. The $K = O(p/|A|)$ condition implies that $E_+(A, \xi A) = \Omega(|A|^4/p)$ and so Lemma~\ref{BKT} implies that $|X| =O(p|A|^2 / (|A|^3 /K)) = O(K p / |A|)$. Therefore $|A|^2 |X| = O(K p |A|) = O(p^2)$. 

Next we use the $K = O(|A|^{1/2})$ condition to prove that $|X| < |A|^2$. Suppose not. Then the $|A|^{5/2} |X|$ term would dominate in the conclusion of Theorem~\ref{Misha C} and we would get 
\[
\f{|X| |A|^3}{K} < \sum_{\xi \in X} E_+(A, \xi A) = O(|A|^{5/2} |X|\},
\]
which would imply $K = \Omega(|A|^{1/2})$. So we must have $|X| \leq |A|^2$ and so the $|A|^{3} |X|^{3/4}$ term dominates in the conclusion of Theorem~\ref{Misha C}. This implies
 \[
|X| \f{|A|^3}{K} \leq \sum_{\xi \in X} E_+(A, \xi A) = O(|X|^{3/4} |A|^3).
\]
It follows that $|X| = O(K^4)$.
\end{proof}

Note that the $K = O(p/|A|)$ condition is not restrictive, as the proof implies.

\section{Sketch of the proof of Theorem~\ref{ProdDiff}}
\label{Sketch}

Before giving a detailed proof of Theorem~\ref{ProdDiff}, we present a quick overview of the argument when $A=B=C=D$. The intention is to illustrate why we can go bellow the $p^{2/3}$ threshold for $\pda$. There naturally is considerable overlap between this section and the subsequent, where a detailed proof is given.

$\pda$ is the support of the function $r:=r_{\pda}$, which counts the number of representations of $x \in \F$ of the form $(a_1-a_2)(a_3-a_4)$ with $a_1,\dots,a_4 \in A$. The Cauchy-Schwarz inequality implies that 
\[
|\pda| \geq \f{|A|^8}{\sum_x r(x)^2}.
\]

The heuristic that random sets should minimise the second moment of $r$ suggests that $\sum r(x)^2 \geq |A|^8/p$. Theorem~\ref{ProdDiff} follows from the fact that $|A|^8/p$ is the correct order of magnitude for $\sum r(x)^2$ provided that $|A| = \Omega(p^{5/8})$. 

To see why this is the case, we observe that, roughly speaking,
\begin{align*}
\sum_{x \in \F} r(x)^2 
& = \sum_{a_i \in A} |\{(a_1-a_2)(a_3-a_4) = (a_5-a_6)(a_7-a_8)\}| \\
& \simeq   \sum_{a_i \in A} \left|\left\{\f{a_1-a_2}{a_7-a_8}= \f{a_5-a_6}{a_3-a_4}\right\}\right| \\
& \simeq \sum_{\xi \neq 0} E_+(A,\xi A)^2.
\end{align*}
To bound $\sum E_+(A, \xi A)^2$, we express each $E_+(A,\xi A) = |A|^4/p + E_\xi$ for some $0 \leq E_\xi \leq |A|^3$. Then 
\[
\sum_{\xi \neq 0} E_+(A,\xi A)^2 \simeq \sum_{\xi \neq 0} \left(\f{|A|^4}{p}\right)^2 + \sum_{\xi \neq 0} E_\xi^2. 
\]
The first sum on the right side is about $|A|^8/p$. Our task is to bound the second sum. Let us begin with a trivial bound.
\[
\sum_{\xi \neq 0} E_\xi ^2 \leq |A|^3 \sum_{\xi \neq 0} E_\xi \overset{\text{Lemma~\ref{BKT}}}\leq p |A|^5.
\]
Noting that
\[
p |A|^5 \leq \f{|A|^8}{p} \iff |A|^3 \geq  p^2, 
\]
we observe that $|\pda| = \Omega(p)$ when $|A| = \Omega(p^{2/3})$.

This simple argument is sharp only when a positive proportion of the sum 
\[
\sum_{\xi \neq 0}  E_\xi \overset{\text{Lemma~\ref{BKT}}}\simeq p |A|^2
\]
comes from the values of $\xi$ where $E_\xi = \Omega(|A|^3)$. 

If this were the case, then there would be $\Omega(p / |A|)$ values of $\xi$ for which $E_+(A,\xi A) \simeq E_\xi = \Omega(|A|^3)$. This would contradict Lemma~\ref{Large K}. This means that $\ds \sum  E_\xi^2$ is considerably smaller than $p|A|^5$, which in turn allows one to go below the $p^{2/3}$ threshold.

\section[An explicit Theorem C]{An explicit Theorem C}
\label{Explicit C}

Let us now prove Theorem~\ref{Misha C}. For notational brevity, we express sets of solutions to equations like $(a-b)(c-d) = (a'-b')(c'-d')$ by
\[
\{(a-b)(c-d) = (a'-b')(c'-d')\}.
\]

\begin{proof}[Proof of Theorem~\ref{Misha C}]
\begin{align*}
\sum_{x \in X}E_+(A, x A) 
&  = \sum_{a_i \in A, x\in X} |\{a_1-a_2 = x(a_3 - a_4)\}| \\
&  = \sum_{t \in \F} \sum_{a_i \in A, x\in X} |\{a_1-a_2 = t = x(a_3 - a_4)\}| \\
& = \sum_{t \in \F} r_{A-A}(t) r_{X(A-A)}(t) \\
& \overset{\text{C-S}}\leq \left( \sum_{t \in \F} r_{A-A}(t)^2 \right)^{1/2} \left( \sum_{t \in \F} r_{X(A-A)}(t)^2 \right)^{1/2}\\
& = E_+(A)^{1/2} \left( \sum_{t \in \F} \sum_{a_i \in A, x_j \in X} |\{ x_1(a_1-a_2) = t = x_2(a_3-a_4)\}| \right)^{1/2} \\
& = E_+(A)^{1/2} \left( \sum_{a_i \in A, x_j \in X} |\{ x_1(a_1-a_2) = x_2(a_3-a_4)\}| \right)^{1/2}.
\end{align*}
We apply Theorem 19 in \cite{AYMRS} to bound the bracketed term under the hypothesis $|A|^2 |X| = O(p^2)$. In the notation of \cite{AYMRS}, we apply Theorem 19 (or the discussion above it) to $B=X$ and $C=A$. So $P = A \times X$ and hence $|L|= |A| |X|$. We note that $|L|  |A| = |A|^2 |X| = O(p^2)$. Therefore the number of solutions to $x_1(a_1-a_2) = x_2(a_3-a_4)$ with the $a_i \in A$ and $x_j \in X$ is 
\[
O(|A|^3 |X|^{3/2} + |A|^2 |X|  (|A| + |X|))  = O(|A|^3 |X|^{3/2} + |A|^2 |X|^2).
\]
The theorem follows by the inequalities $(m+n)^{1/2} \leq m^{1/2} + n^{1/2}$ and $E_+(A) \leq |A|^3$.
\end{proof}

Murphy communicated to us the following analogous result for \emph{multiplicative energies}, which, like Theorem~\ref{Misha C}, is analogous, albeit weaker, to a result of Murphy, Roche-Newton, and Shkredov from \cite{MRS2015}. We define the multiplicative energy of $A$ and its translate $A+\xi$ as follows.
\[
E_\times(A, A + \xi ) = \text{ number of solutions to } a_1(a_2 + \xi) = a_3 (a_4 + \xi) \text{ with the } a_i \in A.
\] 
The multiplicative energy of $A$ is defined by $E_\times(A) = E_\times(A,A)$ and is at most $|A|^3$.
\begin{theorem}[Murphy]\label{Brendan C}
Let $p$ be a prime and $A, X \subseteq \F$. Suppose that $|A|^2 |X| = O(p^2)$. The following inequality is true.
\[
\sum_{x \in X}E_\times(A, A+x) = O(E_\times(A)^{1/2}(|A|^{3/2} |X|^{3/4} + |X| |A|)) = O(|X|^{3/4} |A|^3 + |X| |A|^{5/2}).
\]
\end{theorem}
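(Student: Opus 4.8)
The plan is to run the multiplicative dual of the proof of Theorem~\ref{Misha C}, replacing the quotient-of-differences splitting by a quotient splitting adapted to the translated product $a_1(a_2+x) = a_3(a_4+x)$. The starting observation is that, away from the degenerate solutions where some $a_i$ or some $a_i + x$ vanishes, the defining equation of $E_\times(A, A+x)$ is equivalent to
\[
\f{a_3}{a_1} = \f{a_2+x}{a_4+x}.
\]
Summing over $x \in X$ and inserting an intermediate ratio variable $s$, I would write
\[
\sum_{x\in X}E_\times(A,A+x) = \sum_{s} r_{A/A}(s)\, R(s) + O(|A|^2|X|),
\]
where $r_{A/A}(s)$ counts pairs $(a_1,a_3)$ with $a_3/a_1 = s$, the function $R(s)$ counts triples $(a_2,a_4,x)$ with $(a_2+x)/(a_4+x) = s$, and the error term absorbs the degenerate solutions, which are easily seen to be $O(|A|^2|X|)$ and hence well below the target bound.

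Cauchy--Schwarz then separates the two factors:
\[
\sum_{s} r_{A/A}(s)\, R(s) \le \Big(\sum_s r_{A/A}(s)^2\Big)^{1/2}\Big(\sum_s R(s)^2\Big)^{1/2}.
\]
The first factor is exactly $E_\times(A)^{1/2}$, since $\sum_s r_{A/A}(s)^2$ counts solutions of $a_3 a_1' = a_3' a_1$. The entire problem is thereby reduced to bounding the multiplicative collision count
\[
Q := \sum_s R(s)^2 = \big|\{(a_2+x)(a_4'+x') = (a_2'+x')(a_4+x) : a_i,a_i'\in A,\ x,x'\in X\}\big|,
\]
and I would aim for $Q = O(|A|^3|X|^{3/2} + |A|^2|X|^2)$, the exact multiplicative analogue of the incidence bound used in the additive case. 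Taking square roots, applying $(m+n)^{1/2}\le m^{1/2}+n^{1/2}$, and using $E_\times(A)\le |A|^3$ then yields both stated forms of the conclusion, just as at the end of the proof of Theorem~\ref{Misha C}.

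The heart of the argument, and the step I expect to be the main obstacle, is the bound on $Q$. Writing $u = a_2+x,\ v = a_4+x,\ u'=a_2'+x',\ v'=a_4'+x'$, the defining equation becomes $uv' = u'v$, i.e. the points $(u,v)$ and $(u',v')$ of the set $P = \{(a+x,a'+x): a,a'\in A,\ x\in X\}\subseteq \F^2$ are collinear with the origin; thus $Q = \sum_{\ell \ni 0}|P\cap \ell|^2$, a weighted collinearity count for a point set of size at most $|A|^2|X|$. My plan is to feed this into the Guth--Katz-based machinery exactly as in the proof of Theorem~\ref{Misha C}: apply Theorem~19 of \cite{AYMRS} with $B=X$ and $C=A$, so that the configuration is built from $A\times X$ and the hypothesis $|A|^2|X| = O(p^2)$ supplies the admissibility condition $|L||A| = O(p^2)$. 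The delicate points will be (i) casting the translate-then-ratio structure of $R$ into the precise Cartesian-product point/line format required by that theorem, since it is \emph{not} the literal symbol-for-symbol dual of the dilate-of-difference structure in the additive proof, and (ii) verifying that the off-diagonal degenerate contributions (zeros and vanishing denominators) stay of size $O(|A|^2|X|^2)$ and are therefore absorbed into the second term. Once $Q$ is controlled, the remainder is the routine combination recorded above.
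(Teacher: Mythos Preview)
The paper does not actually prove this theorem: it is stated, attributed to Murphy, immediately after the proof of Theorem~\ref{Misha C}, with no argument supplied. So there is no paper proof to compare against. Your outline is exactly the multiplicative dual of the paper's proof of Theorem~\ref{Misha C} and is correct in structure.

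On the step you flag as delicate: the quantity $Q$ does fit the Rudnev point--plane framework underlying Theorem~19 of \cite{AYMRS}. For fixed $(a',b',x')$ the equation $(a+x)(b'+x') = (a'+x')(b+x)$ expands and rearranges to
\[
a(b'+x') - b(a'+x') + x(b'-a') = 0,
\]
which is \emph{linear} in $(a,b,x)$. Hence $Q$ is precisely an incidence count between the point set $A\times A\times X \subset \F^3$ and a family of $|A|^2|X|$ planes; the hypothesis $|A|^2|X| = O(p^2)$ is Rudnev's admissibility condition, and the maximal number of collinear points in the Cartesian product is $\max(|A|,|X|)$, yielding
\[
Q = O\bigl(|A|^3|X|^{3/2} + |A|^2|X|\max(|A|,|X|)\bigr) = O\bigl(|A|^3|X|^{3/2} + |A|^2|X|^2\bigr),
\]
which is what you need. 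Whether the literal statement of Theorem~19 in \cite{AYMRS} is phrased generally enough to cover this bilinear form, or only the specific shape $b(a-c)$, is a matter of checking that reference; the point--plane theorem it rests on certainly applies. The degenerate planes occur only when $a'=b'=-x'$ and contribute at most $|A|^3|X|$; your estimate $O(|A|^2|X|)$ for the zero-denominator and diagonal contributions in the Cauchy--Schwarz step is also correct. After that, the finish is identical to the end of the proof of Theorem~\ref{Misha C}.
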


\section[Bringing additive energies into the calculation]{Bringing additive energies into the calculation}
\label{Intermediate}

To prove Theorem~\ref{ProdDiff} we express $(A-B)(C-D)$ as the support of the representation function $r:=r_{(A-B)(C-D)}$ and bound the second moment of $r$, which equals the number of solutions to the given equation. The rest follows from the Cauchy-Schwarz inequality.

The proof is split in two parts: The first where we reduce the question to bounding $\ds \sum_{\xi \neq 0} E_+(A, \xi A)^2$; and the second where via Theorem~\ref{Misha C} and Lemma~\ref{BKT} we bound the sum $\ds \ds \sum_{\xi \neq 0} E_+(A, \xi A)^2$. In this section we complete the first step. 

Recall that $B, C,D$ are translates of non-zero dilates of $A$ and that for $x\in \F$ we take 
\begin{equation}\label{r}
r(x) = \text{ number of solutions to $(a-b)(c-d) =x$ with $a \in A, \dots, d \in D$}.
\end{equation}  

\begin{lemma}\label{AddEn}
Let $A \subseteq \F$ and $B, C,D$ be translates of non-zero dilates of $A$. The number of solutions to
\[
(a-b)(c-d) = (a'-b')(c'-d') \text{ with } a,a' \in A, \dots, d,d' \in D
\]
is $\ds O(|A|^6) +  \sum_{\xi \neq 0} E_+(A , \xi A)^2$.
\end{lemma}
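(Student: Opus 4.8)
The plan is to count the solutions to $(a-b)(c-d) = (a'-b')(c'-d')$ by stratifying according to whether the various differences vanish, and then, on the ``generic'' part where nothing vanishes, to divide through and reduce to a statement about common ratios. First I would observe that the number of solutions with $c - d = 0$ (equivalently $c = d$, and then $(a-b)(c'-d') = 0$ forces $a = b$ or $c' = d'$) is $O(|A|^6)$: there are $|A|^2$ choices making $c-d=0$, after which we either choose $a=b$ ($|A|$ choices) and $a',b',c',d'$ freely ($|A|^4$), or choose $c'=d'$ ($|A|^2$) and $a,b,a',b'$ freely ($|A|^4$); in all cases the count is $O(|A|^6)$, and symmetrically for $a-b=0$, $c'-d'=0$, $a'-b'=0$. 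Here I use crucially that $B,C,D$ are translates of dilates of $A$, so e.g. $c = d$ with $c,d \in C = \gamma + \mu A$ amounts to $a_3 = a_4$ for the underlying elements of $A$, and $|C| = |A|$.

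So it remains to bound the number of solutions in which all four differences are non-zero. On this part I would write the equation multiplicatively: $(a-b)(c-d) = (a'-b')(c'-d') \neq 0$ is equivalent to
\[
\frac{a-b}{a'-b'} = \frac{c'-d'}{c-d} =: \xi \neq 0.
\]
Summing over the common value $\xi$, the generic count becomes $\sum_{\xi \neq 0} N_1(\xi) N_2(\xi)$, where $N_1(\xi)$ is the number of solutions to $a - b = \xi(a'-b')$ with $a,a' \in A$ and $b \in B$, $b' \in B$, and $N_2(\xi)$ is the number of solutions to $c'-d' = \xi(c-d)$ with $c,c' \in C$, $d,d'\in D$. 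The next step is to identify each $N_i(\xi)$ with an additive energy $E_+(A, \xi A)$. Since $B = \beta + \nu A$, a difference $b - b'$ of two elements of $B$ is exactly $\nu(a_i - a_j)$ for elements of $A$; likewise differences from $A$ are differences from $A$, and from $C$ and $D$ similarly. Absorbing the fixed non-zero dilation factors into a reparametrisation $\xi \mapsto \xi'$ of the summation variable (a bijection of $\F \sm \{0\}$), each $N_i(\xi)$ becomes, up to such a relabelling, the number of solutions to $(a_1 - a_2) = \xi'(a_3 - a_4)$ with all $a_k \in A$, which is precisely $E_+(A, \xi' A)$ by the Definition. Hence $\sum_{\xi \neq 0} N_1(\xi) N_2(\xi) = \sum_{\xi' \neq 0} E_+(A, \xi' A)^2$ after matching the two reparametrisations — here one must check the two dilation factors can be aligned by a single substitution, which they can since we are free to rescale the summation variable once.

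Combining the two parts gives the claimed bound $O(|A|^6) + \sum_{\xi \neq 0} E_+(A, \xi A)^2$. The step I expect to require the most care is the bookkeeping in the reduction to a common ratio: one must be sure that the passage from $N_1(\xi)N_2(\xi)$ summed over $\xi$ to $E_+(A,\xi A)^2$ summed over $\xi$ genuinely produces the \emph{same} energy in both factors (rather than $E_+(A,\xi A) E_+(A, c\xi A)$ for some stray constant $c$), and that the degenerate strata where one or more differences vanish — and in particular configurations where, say, $a - b = 0$ but the right-hand side is also forced to be zero — are all absorbed into the $O(|A|^6)$ term without double counting. The translation-invariance of differences and the fact that $|B| = |C| = |D| = |A|$ are what make both of these go through cleanly.
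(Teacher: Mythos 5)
Your overall skeleton matches the paper's: discard the degenerate solutions at a cost of $O(|A|^6)$, divide the non-degenerate equation to introduce a common ratio $\xi$, and factor the count as $\sum_{\xi\neq 0}N_1(\xi)N_2(\xi)$. The gap is exactly at the step you flagged as delicate, and your proposed resolution does not close it. You claim that $N_1(\xi)$, the number of solutions to $a-b=\xi(a'-b')$ with $a,a'\in A$, $b,b'\in B$, equals $E_+(A,\xi' A)$ after a single reparametrisation $\xi\mapsto\xi'$. This is false when $B=\beta+\nu A$ is a nontrivial translate or dilate of $A$. The equation rearranges to $a-\xi a'=b-\xi b'$, so
\[
N_1(\xi)=\sum_{x} r_{A-\xi A}(x)\, r_{B-\xi B}(x),
\]
a cross-correlation of two \emph{different} representation functions: since $B-\xi B=(1-\xi)\beta+\nu(A-\xi A)$, the second factor is $r_{A-\xi A}$ composed with the affine map $x\mapsto (x-(1-\xi)\beta)/\nu$. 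The obstruction is not a stray multiplicative constant in $\xi$ (which a rescaling of the summation variable could absorb) but a translation by $(1-\xi)\beta$ that \emph{depends on} $\xi$; no single substitution turns the sum into $\sum_x r_{A-\xi' A}(x)^2$. Your justification also mis-groups the variables: the differences occurring are the mixed ones $a-b\in A-B$, not $b-b'$ and $a-a'$, so the observation that $b-b'=\nu(a_i-a_j)$ is not the relevant one. The paper's fix is to bound the cross-correlation by Cauchy--Schwarz, $N_1(\xi)\le\sqrt{E_+(A,\xi A)\,E_+(B,\xi B)}$, and then use the invariance of additive energy under translation and dilation to get $E_+(B,\xi B)=E_+(A,\xi A)$; one obtains the inequality $N_1(\xi)\le E_+(A,\xi A)$ rather than an identity, which is all the lemma needs.

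A lesser issue: your count of the degenerate solutions is garbled. The number of pairs $(c,d)\in C\times D$ with $c=d$ is $|C\cap D|\le |A|$, not $|A|^2$, and if $c=d$ the left-hand side already vanishes, so the constraint is that the \emph{right-hand} side vanishes ($a'=b'$ or $c'=d'$), not that $a=b$. As written your first branch totals $|A|^7$. The correct accounting (at most $O(|A|^3)$ quadruples make each side zero, multiplied together) does give $O(|A|^6)$, so the conclusion stands, but the bookkeeping needs to be redone.
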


\begin{proof}
We denote the set of solutions to $(a-b)(c-d) = (a'-b')(c'-d')$ by
\[
\{(a-b)(c-d) = (a'-b')(c'-d')\}.
\]

The number of solutions where $(a-b)(c-d) = (a'-b')(c'-d') = 0$ is $O(|A|^6)$. From now on we consider non-zero solutions. To denote this we put a $*$ over sums. We have the following string of inequalities.
\begin{align}\label{1}
& \sum_{a,a' \in A} \sum_{b,b' \in B} \sum_{c,c' \in C} \sum_{d,d' \in D} | \{ (a-b)(c-d) = (a'-b')(c'-d') \neq 0\} | \nonumber \\
& ~= \sum_{a,a' \in A} \sum_{b,b' \in B}^* \sum_{c,c' \in C} \sum_{d,d' \in D}^* | \{ (a-b)(c-d) = (a'-b')(c'-d') \} | \nonumber \\
& ~= \sum_{a,a' \in A} \sum_{b,b' \in B}^* \sum_{c,c' \in C} \sum_{d,d' \in D}^* \left|\left\{\f{a-b}{a'-b'} = \f{c-d}{c'-d'}\right\}\right| \nonumber \\
& ~= \sum_{\xi \neq 0} \sum_{a,a' \in A} \sum_{b,b' \in B}^* \sum_{c,c' \in C} \sum_{d,d' \in D}^* \left| \left \{\f{a-b}{a'-b'} = \xi = \f{c-d}{c'-d'} \right\} \right| \nonumber \\
& ~= \sum_{\xi \neq 0} \left(\sum_{a,a' \in A} \sum_{b,b' \in B}^*\left| \left\{ \f{a-b}{a'-b'} = \xi \right\} \right|\right) \left(\sum_{c,c' \in C} \sum_{d,d' \in D}^* \left| \left\{ \f{c-d}{c'-d'} = \xi \right\} \right| \right).
\end{align}

For $\xi \neq 0$,
\begin{align*}
\sum_{a,a' \in A} \sum_{b,b' \in B}^*\left| \left\{ \f{a-b}{a'-b'} = \xi \right\} \right|
& = \sum_{a,a' \in A} \sum_{b,b' \in B}^* |\{a-\xi a' = b - \xi b'\}| \\
& = \sum_x \sum_{a,a' \in A} \sum_{b,b' \in B}^* |\{a-\xi a' = x =b - \xi b'\}| \\
& \leq \sum_x r_{A - \xi A}(x) r_{B - \xi B}(x)\\
& \leq \sqrt{ \left(\sum_x r_{A - \xi A}(x)^2\right) \left( \sum_xr_{B - \xi B}(x)^2\right) } \\
& = \sqrt{E_+(A, \xi A) E_+(B, \xi B)} \\
& = E_+(A, \xi A).
 \end{align*}
In the last step we used the fact that if $B = c + \lambda A$, then $E_+(A) = E_+(B)$. 

We similarly have 
\[
\sum_{c,c' \in C} \sum_{d,d' \in D}^* \left| \left\{ \f{c-d}{c'-d'} = \xi \right\} \right|   \leq E_+(A, \xi A).
\]
So inequality \eqref{1} becomes 
\[
\sum_{a,a' \in A} \sum_{b,b' \in B} \sum_{c,c' \in C} \sum_{d,d' \in D} | \{ (a-b)(c-d) = (a'-b')(c'-d') \neq 0\} \leq \sum_{\xi \neq 0} E_+(A, \xi A)^2.
\]
The lemma follows by adding the ``$\xi = 0$ term''.
\end{proof}

We continue with some more rearranging.

\begin{lemma}\label{Exi2}
Let $A \subseteq \F$. The following identity holds.
\[
\sum_{\xi \neq 0} E_+(A,\xi A)^2 \leq \f{|A|^8}{p} + 2 |A|^6 + \sum_{\xi \neq 0} \left(E_+(A,\xi A) - \f{|A|^4}{p}\right)^2 .
\]
\end{lemma}
\begin{proof}
\begin{align*}
\sum_{\xi \neq 0} \left(E_+(A,\xi A) - \f{|A|^4}{p}\right)^2 
& = \sum_{\xi \neq 0} E_+(A,\xi A)^2 - 2 \f{|A|^4}{p} \sum_{\xi \neq 0} E_+(A , \xi A) + (p-1) \f{|A|^8}{p^2} \\
& \overset{\text{Lemma~\ref{BKT}}}= \sum_{\xi \neq 0} E_+(A,\xi A)^2 - \f{|A|^8}{p}  - 2 |A|^6 + 4 \f{|A|^7}{p} - \f{|A|^8}{p^2}\\
& \geq \sum_{\xi \neq 0} E_+(A,\xi A)^2 - 2 |A|^6 - \f{|A|^8}{p}. \hskip 10ex  \qedhere
\end{align*} 
\end{proof}

\section{The sum of the squares of additive energies}
\label{SumSq}

The next and more substantial step is to use Lemma~\ref{Large K} to, essentially, bound the sum of the squares of the additive energies.

\begin{proposition}
Let $p$ be a prime and $A \subseteq \F$. The following inequality holds.
\[
\sum_{\xi \neq 0} \left(E_+(A,\xi A) - \f{|A|^4}{p}\right)^2  =  O(p^{2/3} |A|^{16/3} + p |A|^{9/2}).
\]
\end{proposition}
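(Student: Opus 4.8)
The plan is to estimate the sum $\sum_{\xi\neq 0}\bigl(E_+(A,\xi A)-\tfrac{|A|^4}{p}\bigr)^2$ by dyadically decomposing according to the size of the "excess energy'' $E_\xi := E_+(A,\xi A)-\tfrac{|A|^4}{p}\geq 0$, and then applying Lemma~\ref{Large K} on each dyadic level to control how many $\xi$ can contribute. Write $\mathcal L = \lceil\log_2(|A|^3)\rceil$ and, for $0\leq j\leq \mathcal L$, let $X_j = \{\xi\neq 0 : |A|^3/2^{j+1} < E_\xi \leq |A|^3/2^j\}$. Since always $E_\xi \leq E_+(A,\xi A)\leq |A|^3$, these levels cover all $\xi$ with $E_\xi$ bounded below by, say, $1$; the (many) remaining $\xi$ with $E_\xi$ tiny contribute a lower-order term that I will absorb at the end. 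On the level $X_j$ we have $E_\xi \leq |A|^3/2^j =: |A|^3/K_j$, so Lemma~\ref{Large K} applies with $K = K_j = 2^j$ \emph{provided} $K_j = O(\max\{p/|A|,\,|A|^{1/2}\})$, giving $|X_j| = O(K_j^4)$; then
\[
\sum_{\xi\in X_j} E_\xi^2 \;\leq\; |X_j|\cdot\Bigl(\frac{|A|^3}{K_j}\Bigr)^2 \;=\; O\!\Bigl(K_j^4\cdot\frac{|A|^6}{K_j^2}\Bigr) \;=\; O\bigl(K_j^2\,|A|^6\bigr).
\]
This bound \emph{increases} with $K_j$, so the dominant contribution comes from the largest admissible $K_j$, namely $K_j \asymp \max\{p/|A|,\,|A|^{1/2}\}$, yielding a total of $O\bigl(\max\{p^2/|A|^2,\,|A|\}\cdot|A|^6\bigr) = O\bigl(p^2|A|^4 + |A|^7\bigr)$ from all levels with $K_j$ in the valid range (the geometric series in $K_j^2$ is dominated by its top term, and there are only $O(\log p)$ levels, which is absorbed into the $O$-constant in the usual way after noting the sum is a geometric progression).

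The remaining issue is the dyadic levels $X_j$ with $K_j = 2^j$ \emph{exceeding} $\max\{p/|A|,\,|A|^{1/2}\}$, i.e. the $\xi$ with very small excess energy $E_\xi < |A|^3/\max\{p/|A|,|A|^{1/2}\} = \min\{|A|^4/p,\,|A|^{5/2}\}$. For these, Lemma~\ref{Large K} is not available, so I instead bound crudely: $\sum_{\xi: E_\xi \text{ small}} E_\xi^2 \leq (\max E_\xi)\cdot\sum_{\xi\neq 0}E_\xi \leq \min\{|A|^4/p,\,|A|^{5/2}\}\cdot p|A|^2$ by Lemma~\ref{BKT}, which is $\leq \min\{|A|^6,\,p|A|^{9/2}\}$. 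The term $p|A|^{9/2}$ is exactly the second term appearing in the Proposition, and $|A|^6$ is subsumed in the other cases; so this tail is harmless. Combining, the total is $O(p^2|A|^4 + |A|^7 + p|A|^{9/2})$. To match the claimed bound $O(p^{2/3}|A|^{16/3} + p|A|^{9/2})$ one checks the balancing: when $|A|\geq p^{3/7}$ (the standing hypothesis of Theorem~\ref{ProdDiff}) the term $p^2|A|^4$ is the relevant contribution only in the regime where $p/|A|\geq |A|^{1/2}$, i.e. $|A|\leq p^{2/3}$, and there one has $p^2|A|^4 \leq p^{2/3}|A|^{16/3}$ precisely when $|A|\geq p^{?}$ — more carefully, $p^2|A|^4 = p^{2/3}|A|^{16/3}$ at $|A|=p$, and for $|A|\leq p$ the bound $p^2|A|^4$ never exceeds $p^{2/3}|A|^{16/3}\cdot(p/|A|)^{4/3}$, so one must be a little more careful and note that in the range $p/|A|\leq|A|^{1/2}$ (i.e. $|A|\geq p^{2/3}$) the top admissible $K_j$ is $|A|^{1/2}$, giving $|A|\cdot|A|^6 = |A|^7 \leq p^{2/3}|A|^{16/3}$ iff $|A|^{5/3}\leq p^{2/3}$ iff $|A|\leq p^{2/5}$ — which \emph{fails}. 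So the correct statement is that the two regimes for $K_j$ must be recombined and the dyadic sum re-run without splitting the $\max$, i.e. use $K_j \leq p/|A|$ on levels where that is the binding constraint and $K_j\leq |A|^{1/2}$ elsewhere, and add the two resulting geometric sums; the larger of $p^2|A|^4$ and $|A|^7$ then rearranges, via the relation between $|A|$ and powers of $p$, into the clean form $p^{2/3}|A|^{16/3}$ stated.

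The main obstacle is therefore \emph{not} the dyadic machinery itself but the bookkeeping around the two-sided constraint $K = O(\max\{p/|A|,|A|^{1/2}\})$ in Lemma~\ref{Large K}: one has to run the dyadic argument separately against each of the two upper bounds on $K_j$, bound the geometric series by its top term in each case, handle the sub-threshold tail by the trivial Lemma~\ref{BKT} estimate, and then verify that $\max\{p^2|A|^4,\,|A|^7\}$ (plus the $p|A|^{9/2}$ tail) collapses to $O(p^{2/3}|A|^{16/3}+p|A|^{9/2})$ under the working hypothesis $|A|=\Omega(p^{3/7})$. I expect the cleanest route is to not pass to the $\max$ at all but to keep $\max\{p/|A|,|A|^{1/2}\}$ symbolic, write the top-level contribution as $O\bigl((\max\{p/|A|,|A|^{1/2}\})^2|A|^6\bigr) = O(\max\{p^2|A|^4,|A|^7\})$, and then observe $\max\{p^2|A|^4,|A|^7\} = O(p^{2/3}|A|^{16/3})$ precisely because both $p^2|A|^4\leq p^{2/3}|A|^{16/3}\Leftrightarrow p^{4/3}\leq|A|^{4/3}\Leftrightarrow p\leq|A|$ is false in general — so in fact one needs the extra input that when $p>|A|$ the first regime ($K\asymp p/|A|$) is the one in force and a sharper level-by-level estimate replacing $E_\xi^2\leq (|A|^3/K_j)^2$ by $E_\xi^2 \leq (|A|^3/K_j)\cdot E_\xi$ together with $\sum_{\xi\in X_j}E_\xi = O(\,\cdot\,)$ from Lemma~\ref{BKT}–type input tightens the bound to the stated $p^{2/3}|A|^{16/3}$. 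Getting this interpolation between the two crude estimates right is the crux of the proof.
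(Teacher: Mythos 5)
Your toolkit is the right one --- a dyadic decomposition by energy level, Lemma~\ref{Large K} to count the $\xi$ at each level, and the first-moment bound of Lemma~\ref{BKT} for the tail --- but there is a genuine gap in where you place the cutoff between the two regimes, and your own closing paragraph admits the bound you obtain does not match the statement. You run the dyadic sum all the way down to the smallest energies for which Lemma~\ref{Large K} is admissible, so your dominant dyadic term is $K_{\max}^2|A|^6$ with $K_{\max}$ at the top of the admissible range, giving $\max\{p^2|A|^4,|A|^7\}$ (or the corresponding minimum --- note the proof of Lemma~\ref{Large K} actually uses \emph{both} constraints $K=O(p/|A|)$ and $K=O(|A|^{1/2})$, so the ``max'' in its statement should be read with care). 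Either way this is too large: at $|A|\asymp p^{5/8}$ one has $|A|^7=p^{35/8}$, which already exceeds the target main term $|A|^8/p=p^4$, and no rearrangement ``via the relation between $|A|$ and powers of $p$'' turns it into $p^{2/3}|A|^{16/3}$. The final sentences, which say that a ``sharper level-by-level estimate'' and an ``interpolation between the two crude estimates'' are needed, correctly diagnose the problem but do not carry out the fix; that interpolation is precisely the content of the proof.

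The missing idea is to introduce a free threshold $K$ and stop the dyadic decomposition at $E_+(A,\xi A)=|A|^3/K$, well above the bottom of the admissible range. Below the threshold the trivial bound gives $\frac{|A|^3}{K}\sum_{\xi\neq 0}\bigl(E_+(A,\xi A)-\frac{|A|^4}{p}\bigr)\leq p|A|^5/K$ by Lemma~\ref{BKT}, which \emph{decreases} in $K$, while the dyadic part above the threshold gives $O(K^2|A|^6)$, which increases in $K$; equating the two yields $K=(p/|A|)^{1/3}$ and the value $p^{2/3}|A|^{16/3}$ for both contributions. This choice of $K$ is far below $p/|A|$, so that admissibility constraint of Lemma~\ref{Large K} is automatic; the remaining constraint $K=O(|A|^{1/2})$ is what forces one to treat the range $E_+(A,\xi A)\leq |A|^{5/2}$ separately by the same trivial bound, and that is the source of the term $p|A|^{9/2}$ in the statement (your tail estimate $|A|^{5/2}\cdot p|A|^2$ is this same computation, so that piece of your argument survives). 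With the cutoff optimized in this way, the machinery you already set up closes the argument.
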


\begin{proof}
We split the calculation in three parts. Let $K = O(\max\{p/|A|,|A|^{1/2})$ be a parameter to be determined later.

For ``very small'' $E_+(A,\xi A)$ we use Lemma~\ref{BKT}.
\begin{equation}\label{vsmall}
\sum_{E_+(A,\xi A) \leq  |A|^{5/2}} \left(E_+(A,\xi A) - \f{|A|^4}{p}\right)^2 \leq |A|^{5/2}\sum_{\xi \neq 0} \left(E_+(A,\xi A) - \f{|A|^4}{p}\right) \overset{\text{Lem.\,\ref{BKT}}}= p |A|^{9/2}.
\end{equation}

Similarly, for ``small'' $E_+(A,\xi A)$ we have
\begin{equation}\label{small}
\sum_{E_+(A,\xi A) \leq\f{ |A|^3}{K}} \left(E_+(A,\xi A) - \f{|A|^4}{p}\right)^2 \leq \f{p |A|^5}{K}.
\end{equation}

For ``large'' $E_+(A, A\xi)$ we define an integer $k$ via  $2^{k-1} < K \leq 2^k$ and consider sets $X_1, \dots, X_k$ defined by
\[
X_i = \left\{ \xi \neq 0 : \f{|A|^3}{2^i} < E_+(A, \xi A) \leq \f{|A|^3}{2^{i-1}}\right\}.
\]
By Lemma~\ref{Large K} we conclude $|X_i| = O(2^{4i})$. So
\begin{align}\label{large}
\sum_{E_+(A, \xi A) > \tfrac{|A|^3}{K}}  \left(E_+(A, \xi A) - \f{|A|^4}{p} \right)^2  \nonumber
& \leq \sum_{E_+(A, \xi A) > \tfrac{|A|^3}{K}}  E_+(A, \xi A)^2 \nonumber \\
& = \sum_{i=1}^k \sum_{\xi \in X_i} E_+(A, \xi A)^2 \nonumber \\
& \leq \sum_{i=1}^k |X_i| \f{|A|^6}{2^{2i}} \nonumber \\
& = O\left( |A|^6 \sum_{i=1}^k 2^{2i} \right)  \nonumber \\
&= O(2^{2k} |A|^6)  \nonumber \\
&= O(K^2 |A|^6).
\end{align}

Therefore
\begin{align*}
\sum_{\xi \neq 0} \left(E_+(A,\xi A) - \f{|A|^4}{p}\right)^2 
& \leq \sum_{E_+(A, \xi A) \leq  |A|^{5/2}}  \left(E_+(A, \xi A) - \f{|A|^4}{p} \right)^2 \\
& \hskip 5ex + \sum_{E_+(A, \xi A) \leq  \tfrac{|A|^3}{K}}  \left(E_+(A, \xi A) - \f{|A|^4}{p} \right)^2 \\
& \hskip 5ex + \sum_{E_+(A, \xi A) > \tfrac{|A|^3}{K}}  \left(E_+(A, \xi A) - \f{|A|^4}{p} \right)^2 \\
& \overset{\eqref{vsmall},\eqref{small}, \eqref{large}}  = O\left( p |A|^{9/2} +  \f{p |A|^5}{K} + |A|^6 K^2 \right).
\end{align*}

To optimise $K$ we equate the two expressions containing it.
\[
\f{p |A|^5}{K} = |A|^6 K^2 \iff K^3 = \f{p}{|A|} \iff K = \left(\f{p}{|A|}\right)^{1/3},
\]
crucially noting that the above value is indeed $O(\sqrt{p/|A|})$. We have already assumed that $K = O(|A|^{1/2})$.

Taking $K = (p/|A|)^{1/3}$ produces the stated upper bound.
\end{proof}

\section{Conclusion of the proof of Theorem~\ref{ProdDiff}}
\label{Proof}

We are finally in position to prove Theorem~\ref{ProdDiff}.

\begin{proof}[Proof of Theorem~\ref{ProdDiff}]
By Lemmata~\ref{AddEn} and \ref{Exi2}, the number of solutions to
\[
(a-b)(c-d) = (a'-b')(c'-d') \text{ with } a,a' \in A, \dots , d,d' \in D
\]
is $\ds \f{|A|^8}{p} + O(p^{2/3} |A|^{16/3} + p |A|^{9/2} + |A|^6) =  \f{|A|^8}{p} + O(p^{2/3} |A|^{16/3} + p |A|^{9/2})$.

When $|A| = \Omega(p^{5/8})$, the first term dominates the second and third.

For the final conclusion we note that $\psd$ is the support of the function $r$ defined in \eqref{r} on p.~\pageref{r}. By the Cauchy-Schwarz inequality 
\[
|\psd| \geq \f{\left( \sum_{x \in \F} r(x) \right)^2}{\sum_{x \in \F} r(x)^2} =  \f{|A|^8}{\sum_{x \in \F} r(x)^2} 
\]
because each ordered quadruple $(a,b,c,d) \in A \times B \times C \times D$ contributes 1 to the sum $\sum_x r(x)$. 

In the $|A|=\Omega(p^{5/8})$ range, the sum $\sum_x r(x)^2 = O(|A|^8/p)$ because it is nothing other than the number of solutions to
\[
(a-b)(c-d) = (a'-b')(c'-d') \text{ with } a,a' \in A, \dots , d,d' \in D.
\]
\end{proof}

\phantomsection

\addcontentsline{toc}{section}{References}

\bibliography{all}

\hspace{20pt} Department of Mathematics, University of Rochester, New York, USA.

\hspace{20pt} \textit{Email address}: \href{mailto:giorgis@cantab.net}{giorgis@cantab.net}

\end{document}